\theoremstyle{plain}
\newtheorem{theorem}{Theorem}
\newtheorem*{eg}{Erd\H{o}s--Gallai Theorem}
\newtheorem{corollary}{Corollary}
\theoremstyle{definition}
\newtheorem{example}{Example}
\newtheorem{remark}{Remark}
\def\d{\underbar{\em d}}
\title{An improvement of a result of  {Z}verovich--{Z}verovich} 
\author{Grant Cairns}
\author{Stacey Mendan}
\address{Department of Mathematics and Statistics, La Trobe University, Melbourne, Australia 3086}
\email{G.Cairns@latrobe.edu.au}
\email{spmendan@students.latrobe.edu.au}
\keywords{graph, graphic sequence, }
\begin{document}

\maketitle

\begin{abstract}
We give an improvement of a result of Zverovich and Zverovich which gives a condition on the first and last elements in a decreasing sequence of positive integers for the sequence to be graphic, that is, the degree sequence of a finite graph.
\end{abstract}

\section{Statement of Results}

A finite sequence of positive  integers is \emph{graphic} if it occurs as the sequence of vertex degrees of a  graph. Here,  graphs are understood to be \emph{simple}, in that they have no loops or repeated edges.
A result of Zverovich and Zverovich states:

\begin{theorem}[{\cite[Theorem 6]{ZZ}}]\label{T:origzz}
Let $a,b$ be reals. If $\emph{\d}=(d_1,\dots,d_n)$ is a sequence of positive integers in decreasing order with $d_1\leq a, d_n\geq b$ and
\[
n\geq \frac{(1+a+b)^2}{4b},
\]
then $\emph{\d}$  is graphic.
\end{theorem}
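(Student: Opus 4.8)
The plan is to reduce the statement to the Erd\H{o}s--Gallai inequalities and then verify these by elementary estimates. Recall that a non-increasing sequence of nonnegative integers of even sum is graphic if and only if
\[
\sum_{i=1}^k d_i \le k(k-1) + \sum_{i=k+1}^n \min(d_i,k)
\]
holds for every $k$ with $1 \le k \le n$. Since the degree sum of any graph is even, I take this parity condition as understood; it genuinely cannot be dropped, as $(1,1,1)$ satisfies the hypotheses with $a=b=1$ and $n=3 \ge \tfrac{9}{4}$ yet fails to be graphic. Thus it suffices to fix $k$ and establish the displayed inequality, which I would do by bounding the two sides against $a$ and $b$.

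On the left, monotonicity of $\d$ together with $d_1 \le a$ gives $\sum_{i=1}^k d_i \le ka$. On the right, every $d_i \ge d_n \ge b$, and since $t \mapsto \min(t,k)$ is non-decreasing we have $\min(d_i,k) \ge \min(b,k)$; hence the right-hand side is at least $k(k-1) + (n-k)\min(b,k)$. It therefore suffices to prove
\[
ka \le k(k-1) + (n-k)\min(b,k) \qquad (1 \le k \le n).
\]
I would then split on the value of $\min(b,k)$. When $k > b$, the inequality rearranges to $k^2 - (1+a+b)k + nb \ge 0$; the decisive observation is that this quadratic in $k$ has discriminant $(1+a+b)^2 - 4nb$, and the hypothesis $n \ge (1+a+b)^2/(4b)$ says precisely (using $b>0$) that this discriminant is at most $0$, so the quadratic is nonnegative for every real $k$. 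When $k \le b$ we have $\min(b,k)=k$ and the inequality collapses to $a \le n-1$, which follows from the same hypothesis via the identity
\[
\frac{(1+a+b)^2}{4b} - (a+1) = \frac{(a-b+1)^2}{4b} \ge 0.
\]
Combining the two cases gives the Erd\H{o}s--Gallai inequalities for all $k$, so $\d$ is graphic.

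I expect the only real content to be the middle step: recognizing the reduced inequality in the range $k>b$ as a quadratic in $k$ whose discriminant is exactly the quantity $(1+a+b)^2 - 4nb$ appearing in the hypothesis. Everything feeding into it --- the bound $ka$, the monotonicity bound $\min(d_i,k)\ge\min(b,k)$, and the perfect-square identity covering $k \le b$ --- is routine, so the main obstacle is really just arranging the estimates so that the hypothesis enters through this discriminant rather than through some weaker inequality.
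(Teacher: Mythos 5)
Your proof is correct and is essentially the paper's own argument in contrapositive-free form: both reduce the claim to the Erd\H{o}s--Gallai inequalities, bound the left side by $ka$ and the right side from below using $d_n\ge b$, split into a small-$k$ case (your $k\le b$, the paper's $k<d_n$) handled by a perfect-square identity, and close the main case $k>b$ by noting that the hypothesis forces the quadratic $k^2-(1+a+b)k+nb$ to be nonnegative (you via its discriminant, the paper by completing the square). Your observation that the even-sum hypothesis must be read into the statement is also consistent with the paper, whose equivalent Theorem 2 states that hypothesis explicitly.
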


Notice that here the term $\frac{(1+a+b)^2}{4b}$ is monotonic increasing in $a$, for $a\geq 1$ and fixed $b$, and it is also monotonic decreasing in $b$, for $a\geq b\geq 1$ and fixed $a$. Thus any sequence that satisfies the inequality $n\geq \frac{(1+a+b)^2}{4b}$, for any pair $a\geq d_1, b\leq d_n$, will also satisfy the inequality  $n\geq  \frac{(1 + d_1 +  d_n)^2}{4  d_n }$.  So Theorem \ref{T:origzz} has the following equivalent expression.

\begin{theorem}\label{T:zz}
Suppose that $\emph{\d}=(d_1,\dots ,d_n)$ is a decreasing sequence  of positive integers with even sum. If  
\begin{equation}\label{E:zz}
n\geq  \frac{(1 + d_1 +  d_n)^2}{4  d_n },
\end{equation}
then $\emph{\d}$  is graphic.
\end{theorem}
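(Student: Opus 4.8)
The plan is to obtain Theorem~\ref{T:zz} as the sharpest special case of Theorem~\ref{T:origzz}, with no new work beyond a single substitution. Let $\d=(d_1,\dots,d_n)$ be a decreasing sequence of positive integers with even sum for which the bound \eqref{E:zz} holds. I would invoke Theorem~\ref{T:origzz} with the particular parameters $a=d_1$ and $b=d_n$.

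With this choice the conditions $d_1\leq a$ and $d_n\geq b$ are satisfied automatically, indeed with equality, so the constraints that Theorem~\ref{T:origzz} imposes on $a$ and $b$ are met. The remaining hypothesis $n\geq\frac{(1+a+b)^2}{4b}$ then reads precisely as \eqref{E:zz}, which is assumed. Theorem~\ref{T:origzz} therefore applies and yields immediately that $\d$ is graphic.

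There is little real difficulty here; the only point that is not automatic under the substitution is the parity of the sum. Since every graphic sequence has even sum while the numerical hypotheses $d_1\leq a$, $d_n\geq b$, $n\geq\frac{(1+a+b)^2}{4b}$ say nothing about $\sum_i d_i$ (the length-three sequence $(1,1,1)$ satisfies them but is not graphic), the even-sum assumption must be read as part of Theorem~\ref{T:origzz} and is made explicit in Theorem~\ref{T:zz}. Finally, the monotonicity of $\frac{(1+a+b)^2}{4b}$ noted before the statement shows that $a=d_1$, $b=d_n$ gives the weakest admissible bound, so this reduction in fact proves that the two theorems are equivalent, not merely that Theorem~\ref{T:origzz} implies Theorem~\ref{T:zz}.
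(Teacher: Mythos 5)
Your argument is correct, but it is not the paper's proof; it is essentially the observation the paper makes \emph{before} stating Theorem \ref{T:zz}. Specializing Theorem \ref{T:origzz} to $a=d_1$, $b=d_n$ does yield Theorem \ref{T:zz} at once (and the monotonicity in $a$ and $b$ gives the reverse implication), and your remark that the even-sum hypothesis must be read as implicit in Theorem \ref{T:origzz} --- since $(1,1,1)$ meets its stated hypotheses but is not graphic --- is correct and careful. However, the paper records exactly this reduction in the paragraph between the two theorems, which is why Theorem \ref{T:zz} is introduced as an ``equivalent expression'' of Theorem \ref{T:origzz}; the point of the proof in Section \ref{S:alons} is different in kind, namely to give a self-contained proof from the Erd\H{o}s--Gallai Theorem, independent of \cite{ZZ} and hence of the index results of \cite{Li,HIS} used there. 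That proof assumes $\d$ is not graphic, takes an index $k$ violating (EG), replaces $d_1,\dots,d_k$ by $d_1$ and $d_{k+1},\dots,d_n$ by $d_n$ (the violation persists), rules out $k<d_n$ by showing it would force $d_1\geq n$ and hence $(n-(d_n-1))^2-(d_n-1)^2+(1+d_n)^2\leq 0$, which is impossible, and finally completes the square in $k$ to contradict \eqref{E:zz}. The trade-off is clear: your route is essentially free but leans entirely on the cited theorem, so it adds nothing beyond what the paper's introduction already states; the paper's route is elementary and self-contained, and --- more importantly --- it is the template that the proof of the main result, Theorem \ref{T:ourzz}, refines. If the goal were only to establish Theorem \ref{T:zz} given the literature, your proof suffices; if the goal is the one the paper sets itself, a simpler direct proof, it does not.
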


  The simplified form of Theorem  \ref{T:zz} also affords a somewhat simpler proof, which we give in Section \ref{S:alons}
below. Admittedly, the proof in \cite{ZZ} is already quite elementary, though it does use the strong index results of \cite{Li,HIS}.

The following corollary of Zverovich--Zverovich's is obtained by taking $a=d_1$ and $b=1$ in Theorem \ref{T:origzz}.

\begin{corollary}[{\cite[Corollary 2]{ZZ}}]\label{C:zz}
Suppose that $\emph{\d} =(d_1,\dots ,d_n)$ is a decreasing sequence  of positive integers with even sum. If  $n\geq \frac{d_1^2}{4}+d_1+1$, then $\emph{\d}$  is graphic.
\end{corollary}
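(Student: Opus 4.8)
The plan is to obtain this as a direct specialization of Theorem~\ref{T:origzz}, using the substitution flagged in the text, namely $a=d_1$ and $b=1$. First I would check that the hypotheses of Theorem~\ref{T:origzz} hold for these choices. The requirement $d_1\leq a$ becomes $d_1\leq d_1$, which is automatic, while $d_n\geq b$ becomes $d_n\geq 1$, which holds because every entry of the sequence is a positive integer and the sequence is decreasing, so in particular $d_n\geq 1$. The even-sum hypothesis carries over unchanged, since it is a standing assumption on a graphic sequence.

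Next I would simplify the numerical threshold. Substituting $a=d_1$ and $b=1$ into $\frac{(1+a+b)^2}{4b}$ gives $\frac{(d_1+2)^2}{4}$, and expanding the square yields $\frac{d_1^2}{4}+d_1+1$. This is exactly the bound hypothesized in the corollary, so the assumption $n\geq \frac{d_1^2}{4}+d_1+1$ is precisely the inequality $n\geq\frac{(1+a+b)^2}{4b}$ in this special case. Theorem~\ref{T:origzz} then immediately gives that the sequence is graphic.

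Since the argument is a one-line specialization, I do not expect any genuine obstacle beyond this elementary algebra; the only point needing a moment's care is confirming the harmless facts that $d_n\geq 1$ and that the even-sum condition is compatible with taking $b=1$. As an alternative route, I note the same conclusion follows from the reformulated Theorem~\ref{T:zz} together with the monotonicity already observed in the text: because $\frac{(1+a+b)^2}{4b}$ is decreasing in $b$ for $a\geq b\geq 1$, the choice $b=1\leq d_n$ produces the largest threshold, so a sequence meeting the corollary's bound automatically satisfies inequality~\eqref{E:zz} and is therefore graphic.
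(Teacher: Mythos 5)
Your proposal is correct and is exactly the paper's own route: the paper derives Corollary~\ref{C:zz} precisely by setting $a=d_1$ and $b=1$ in Theorem~\ref{T:origzz}, whereupon $\frac{(1+a+b)^2}{4b}=\frac{(d_1+2)^2}{4}=\frac{d_1^2}{4}+d_1+1$. Your verification of the hypotheses ($d_n\geq 1$ since the entries are positive integers) and the algebraic simplification match the intended one-line specialization.
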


Zverovich--Zverovich state that the bound of Corollary \ref{C:zz} ``cannot be improved'', and they give examples to this effect. In fact, there is an improvement, as we will now describe. The subtlety here is that in the Zverovich--Zverovich  examples, for a given (integer) value of $n$, the Corollary \ref{C:zz} bound can't be improved for integer $d_1$. Nevertheless the bound on $n$, for given integer $d_1$, can be improved. We prove the following result in Section \ref{S:alons}.

\begin{theorem}\label{T:ourzz}
Suppose that $\emph{\d} =(d_1,\dots ,d_n)$ is a decreasing sequence  of positive integers with even sum. If  $n\geq \left\lfloor\frac{d_1^2}{4}+d_1\right\rfloor$, then $\emph{\d}$  is graphic.
\end{theorem}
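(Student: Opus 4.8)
My plan is to prove the theorem directly from the Erd\H{o}s--Gallai theorem, which I will invoke in the form: a decreasing sequence $(d_1,\dots,d_n)$ of positive integers with even sum is graphic if and only if
\[
\sum_{i=1}^{k} d_i \;\le\; k(k-1)+\sum_{i=k+1}^{n}\min(d_i,k)
\]
for every $k\in\{1,\dots,n\}$. Writing $g(k)$ for the left-hand side minus the right-hand side, it suffices to show $g(k)\le 0$ for all such $k$. Before estimating $g$, I would record the elementary identity $N:=\bigl\lfloor \tfrac{d_1^2}{4}+d_1\bigr\rfloor = f^\ast-1$, where $f(k):=k(d_1+2-k)$ and $f^\ast:=\max_{k\ge 1}f(k)$. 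This is a one-line check in each parity of $d_1$, since $f$ is a downward parabola with real maximum $(d_1+2)^2/4$ attained at $k=(d_1+2)/2$; the integer maximum is $\tfrac{d_1^2}{4}+d_1+1$ when $d_1$ is even and $\tfrac{d_1^2+4d_1+3}{4}$ when $d_1$ is odd, and in both cases it equals $N+1$.

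Next I would bound $g(k)$ crudely. Since $d_i\le d_1$ we have $\sum_{i\le k}d_i\le kd_1$, and since every $d_i\ge 1$ and $k\ge 1$ we have $\min(d_i,k)\ge 1$, so $\sum_{i>k}\min(d_i,k)\ge n-k$. Hence
\[
g(k)\;\le\; kd_1-k(k-1)-(n-k)\;=\;f(k)-n\;\le\; f^\ast-n .
\]
As the hypothesis gives $n\ge N=f^\ast-1$, this yields $g(k)\le 1$ for every $k$, with $g(k)\le 0$ as soon as $n\ge N+1$. So the sequence is graphic whenever $n>N$, and the entire theorem reduces to the single boundary value $n=N=f^\ast-1$, at which I must rule out the possibility $g(k)=1$.

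The heart of the argument is the equality analysis at $n=N$. If $g(k)=1$, then every inequality above is tight: $f(k)=f^\ast$ (so $k$ is a maximiser of $f$, and $k\ge 2$ once $d_1\ge 2$), $d_1=\dots=d_k$, and $\min(d_i,k)=1$ for all $i>k$, which for $k\ge 2$ forces $d_{k+1}=\dots=d_n=1$. Thus the only obstruction is the extremal sequence consisting of $k$ copies of $d_1$ followed by $n-k$ ones. The key point is that this sequence has \emph{odd} sum: using $kd_1=f(k)+k(k-2)=f^\ast+k(k-2)$ and $n=f^\ast-1$, its sum equals $kd_1+(n-k)=2f^\ast-1+k(k-3)$, which is odd because $k(k-3)$ is always even. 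Since the sum is assumed even, this extremal configuration is excluded, so $g(k)\le 0$ for all $k$ and the sequence is graphic. The degenerate case $d_1=1$ (all terms equal to $1$, graphic exactly when $n$ is even) is immediate and can be dispatched separately.

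I expect the main obstacle to be precisely this equality analysis: the crude bound only gives $g(k)\le 1$, so the whole improvement over Corollary~\ref{C:zz} hinges on showing that the unique way to attain $g(k)=1$ produces a sequence of odd sum, which the even-sum hypothesis forbids. Everything else—the parabola identity $f^\ast=N+1$ and the crude estimate—is routine; it is the parity of the extremal configuration that does the real work and explains why the ``$+1$'' in Corollary~\ref{C:zz} can be removed.
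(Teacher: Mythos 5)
Your proof is correct, and although it runs on the same engine as the paper's argument---the Erd\H{o}s--Gallai theorem, the worst-case estimates $\sum_{i\le k}d_i\le kd_1$ and $\sum_{i>k}\min(d_i,k)\ge n-k$, and the closing observation that the only extremal configuration $(d_1^{k},1^{n-k})$ has odd sum---your organization is genuinely different and more economical. The paper argues by contradiction and leans on its earlier results: it splits on the parity of $d_1$, invokes Theorem \ref{T:zz} to dispose of $d_n\ge 2$ in the even case, uses Corollary \ref{C:zz} to pin $n$ down to one or two explicit values, and in the odd case needs a separate discriminant computation (a quadratic in $k$ whose coefficients involve $d_n$) just to conclude $d_n=1$. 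In your direct version none of that is needed: because you bound the tail by $\min(d_i,k)\ge 1$ rather than replacing the tail entries by $d_n$, the value of $d_n$ never enters, and the equality analysis at the single boundary value $n=N=f^\ast-1$ automatically forces the tail entries to equal $1$ (using that every maximizer $k$ of $f$ satisfies $k\ge 2$ once $d_1\ge 2$, with the degenerate case $d_1=1$ dispatched separately, as you note). Your single parity computation, $kd_1+(n-k)=2f^\ast-1+k(k-3)$, odd since $k(k-3)$ is always even, likewise replaces the paper's three separate sum evaluations $3x^2+3x-1$, $3x^2-x-1$ and $3x^2+x-3$. What the paper's route buys is the explicit link to Theorem \ref{T:origzz} and Corollary \ref{C:zz}, exhibiting the new bound as a boundary-case refinement of the known ones; what yours buys is a self-contained, parity-uniform argument that isolates exactly why the improvement is possible: the only sequences meeting the crude Erd\H{o}s--Gallai estimate with equality at $n=N$ have odd sum, so the even-sum hypothesis removes them.
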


\begin{example}
There are many examples of sequences that verify the hypotheses of Theorem \ref{T:ourzz} but not those of Corollary \ref{C:zz}. For example, for every positive odd integer $x$, consider the sequence $(2x,1^{x^2+2x-1})$, and for $x$ even,   consider the sequence $(2x,2x,1^{x^2+2x-2})$. Here, and in sequences throughout this paper, the superscripts indicate the number of repetitions of the entry. \end{example}

\begin{example}\label{Eg}
The following examples show that the bound of Theorem \ref{T:ourzz} is sharp. For $d$ even, say $d=2x$ with $x\geq 1$, let 
$\d =(d^{x+1},1^{x^2+x-2})$.
For $d$ odd, say $d=2x+1$  with $x\geq 1$, let 
$\d=(d^{x+1},1^{x^2+2x-1})$.
In each case  $g$ has even sum, $n=\left\lfloor\frac{d^2}{4}+d\right\rfloor-1$, but $\d$ is not graphic, as one can see from the Erd\H{o}s--Gallai Theorem  \cite{TVW}.

\end{example}

\begin{remark}
The fact that Theorem \ref{T:zz} is not sharp has also been remarked in \cite{BHJW}, in the abstract of which the authors state that Theorem \ref{T:zz}  is ``sharp  within 1''. They give the bound 
\begin{equation}\label{E:BHJW}
n\geq\frac{(1 + d_1 +  d_n)^2-\epsilon'}{4  d_n },
\end{equation}
 where $\epsilon'=0$ if $d_1 +  d_n$ is odd, and $\epsilon'=1$ otherwise.
Consider any decreasing sequence with $d_1=2x+1$ and $d_n=1$. Note that the bound given by Theorem \ref{T:zz} is $n\geq x^2+3x+3$,
the bound given by \eqref{E:BHJW} is $n\geq x^2+3x+2$, while Theorem \ref{T:ourzz} gives the stronger bound  $n\geq x^2+3x+1$.  The paper \cite{BHJW} gives more precise bounds, as a function of $d_1,d_n$, and the maximal gap in the sequence.
\end{remark}

\begin{remark}
There are many  other recent papers on graphic sequences; see for example \cite{TT,Yin,BHJW,CSloops}.
\end{remark}

\section{Proofs of Theorems  \ref{T:zz}  and \ref{T:ourzz}}\label{S:alons}

We will require the Erd\H{o}s--Gallai Theorem, which we recall for convenience.

\begin{eg}A sequence $\emph{\d}=(d_1 ,\dots, d_n )$ of nonnegative integers in decreasing order is graphic if and only if its sum is even and, for each integer $k$ with $1 \leq  k \leq  n$,
\begin{equation*}
\sum_{i=1}^k d_i\leq  k(k-1)+ \sum_{i=k+1}^n\min\{k,d_i\}.\tag{EG}
\end{equation*}
\end{eg}

\begin{proof}[Proof of Theorem \ref{T:zz}]
Suppose that $\d=(d_1,\dots ,d_n)$ is a decreasing sequence with even sum, satisfying \eqref{E:zz}, and which is not graphic. By the Erd\H{o}s--Gallai Theorem, there exists $k$ with $1 \leq  k \leq  n$, such that
\begin{equation}\label{E:notEG}
\sum_{i=1}^k d_i>  k(k-1)+ \sum_{i=k+1}^n\min\{k,d_i\}.
\end{equation}
For each $i$ with $1\leq i\leq k$, replace $d_i$ by $d_1$; the left hand side of \eqref{E:notEG} is not decreased, while the right hand side of \eqref{E:notEG} is unchanged, so \eqref{E:notEG} still holds. 
Now for each $i$ with $k+1\leq i\leq n$, replace $d_i$ by $d_n$;  the left hand side of \eqref{E:notEG} is unchanged, while the right hand side of \eqref{E:notEG} has not increased, so \eqref{E:notEG} again holds. Notice that if $k< d_n$, then \eqref{E:notEG} gives $kd_1>  k(k-1)+(n-k)k=k(n-1)$, and so $d_1\geq n$. Then \eqref{E:zz} would give $4n   d_n \geq  (1 + d_n + n)^2$, that is, $(n-(d_n-1))^2-(d_n-1)^2+(1 + d_n)^2\leq 0$. But  this inequality clearly has no solutions. Hence $k\geq d_n$. Thus \eqref{E:notEG} now reads 
$kd_1>  k(k-1)+(n-k)d_n$, or equivalently
\[
(k-\frac12(1+d_1+d_n))^2- \frac14(1+d_1+d_n)^2 +nd_n<0.
\]
But this contradicts the hypothesis.
\end{proof}

The following proof uses the same general strategy as the preceding proof, but  requires a somewhat more careful argument.

\begin{proof}[Proof of Theorem \ref{T:ourzz}]

Suppose that $\d$ satisfies the hypotheses of the theorem.  First suppose that $d_1$ is even, say $d_1=2x$. If $d_n\geq 2$, then
since $\frac{(1 + d_n +  d_1)^2}{4  d_n }$  is a strictly monotonic decreasing function of $d_n$ for $1\leq d_n\leq d_1$, we have
\[
n\geq  \frac{d_1^2}{4}+d_1= \frac{(2+d_1)^2}{4}-1> \frac{(1 + d_n +  d_1)^2}{4  d_n }-1,
\]
so $n\geq  \frac{(1 + d_n +  d_1)^2}{4  d_n }$ and hence $\d$ is graphic by Theorem \ref{T:zz}.
So, assuming that $\d$ is not graphic, we may suppose that $d_n=1$. Furthermore, by Corollary \ref{C:zz}, we may assume that $n=\frac{d_1^2}{4}+d_1$, so $n=x^2+2x$.

Now, as in the proof of Theorem \ref{T:zz}, by the Erd\H{o}s--Gallai Theorem, there exists $k$ with $1 \leq  k \leq  n$, such that
\begin{equation}\label{E:notEGagain}
\sum_{i=1}^k d_i>  k(k-1)+ \sum_{i=k+1}^n\min\{k,d_i\}.
\end{equation}
For each $i$ with $1\leq i\leq k$, replace $d_i$ by $d_1$; the left hand side of \eqref{E:notEGagain} is not decreased, while the right hand side of \eqref{E:notEGagain} is unchanged, so \eqref{E:notEGagain} still holds. 
For each $i$ with $k+1\leq i\leq n$, replace $d_i$ by $1$;  the left hand side of \eqref{E:notEGagain} is unchanged, while the right hand side of \eqref{E:notEGagain} has not increased, so \eqref{E:notEGagain} again holds.
Then \eqref{E:notEGagain} reads
$kd_1>  k(k-1)+(n-k)$, and consequently, rearranging terms, $ (k-x-1)^2-1<0$. 
Thus $k=x+1$. Notice that for $1\leq i\leq k$, if any of the original terms $d_i$  had been  less than $d_1$, we would have obtained $ (k-x-1)^2<0$, which is impossible. Similarly, for $k+1\leq i\leq n$, all the original terms $d_i$ must have been all equal to one. Thus $\d=(d_1^{k},1^{n-k})=((2x)^{x+1},1^{x^2+x-1})$. So $\d$ has sum  $2x(x+1)+x^2+x-1= 3x^2 +3x-1$, which is odd, regardless of whether $x$ is even or odd. This contradicts the hypothesis.

Now consider the case where $d_1$ is odd, say $d_1=2x-1$. The theorem is trivial for $\d=(1^n)$, so we may assume that $x>1$. We use essentially the same approach as we used in the even case, but the odd case is somewhat more complicated. 
By Corollary \ref{C:zz}, assuming $\d$ is not graphic, we have $\frac{d_1^2}{4}+d_1+1>n$, and hence, as $d_1$ is odd, $\frac{d_1^2}{4}+d_1+\frac34\geq n$.
Thus, since $n\geq  \left\lfloor\frac{d_1^2}{4}+d_1\right\rfloor=\frac{d_1^2}{4}+d_1-\frac14$, we have 
$n=\frac{d_1^2}{4}+d_1+\frac34$ or $n=\frac{d_1^2}{4}+d_1-\frac14$. Thus there are two cases:
\begin{enumerate}
\item[(i)] $n=x^2+x-1$,
\item[(ii)] $n=x^2+x$.
\end{enumerate}
By the Erd\H{o}s--Gallai Theorem, there exists $k$ with $1 \leq  k \leq  n$, such that
\begin{equation}\label{E:notEGagain2}
\sum_{i=1}^k d_i>  k(k-1)+ \sum_{i=k+1}^n\min\{k,d_i\}.
\end{equation}
As before, for each $i$ with $1\leq i\leq k$, replace $d_i$ by $d_1$ and for each $i$ with $k+1\leq i\leq n$, replace $d_i$ by $d_n$, and note that \eqref{E:notEGagain2} again holds.
Arguing as in the proof of Theorem \ref{T:zz}, notice that if $k< d_n$, then \eqref{E:notEGagain2} gives $kd_1>  k(k-1)+(n-k)k=k(n-1)$, and so $d_1\geq n$. In both cases (i) and (ii) we would have $2x-1\geq n\geq x^2+x-1$ and hence $x\leq1$, contrary to our assumption. Thus $k\geq d_n$ and  \eqref{E:notEGagain2} reads
$kd_1>  k(k-1)+(n-k)d_n$, and consequently, rearranging terms, we obtain in the respective cases:
\begin{enumerate}
\item[(i)] $d_n x^2  - d_n k + k^2 + d_n x - 2 k x -d_n<0$.
\item[(ii)] $d_n x^2 -d_n k + k^2 + d_n x - 2 k x <0$,
\end{enumerate}
In both cases we have $d_n x^2  - d_n k + k^2 + d_n x - 2 k x -d_n<0$. Consider  $d_n x^2  - d_n k + k^2 + d_n x - 2 k x -d_n$ as a quadratic in $k$. For this to be negative, its discriminant, $4 d_n + d_n^2 + 4 x^2 - 4 d_n x^2$, must be positive. If $d_n>1$ we obtain  $x^2< \frac{4 d_n + d_n^2}{4(d_n-1)}$. 
For $d_n=2$ we have $x^2< 3$ and so $x=1$, contrary to our assumption.
Similarly, for $d_n=3$ we have $x^2< \frac{21}8$ and so again $x=1$.
For $d_n\geq 4$, the function $\frac{4 d_n + d_n^2}{4(d_n-1)}$ is monotonic increasing in $d_n$. So, as $d_n\leq d_1$,
\[
x^2< \frac{4 d_1 + d_1^2}{4(d_1-1)}=\frac{4x^2+4x-3}{8x-8}<\frac{x^2+x}{2(x-1)},
\]
which again gives $x=1$. We conclude that  $d_n=1$.

So the two cases are:
\begin{enumerate}
\item[(i)] $ x^2  -  k + k^2 +  x - 2 k x -1=(k - x) (k - x - 1)-1 <0$.
\item[(ii)] $ x^2 - k + k^2 +  x - 2 k x=(k - x) (k - x - 1) <0$,
\end{enumerate}
In case (ii) we must have $x<k<x+1$, but this is impossible for integer $k$ and $x$.

In case (i), either $k=x$ or $k=x+1$. Notice that for $1\leq i\leq k$, if any of the original terms $d_i$  had been  less than $d_1$, we would have obtained $ (k - x) (k - x - 1) <0$, which is impossible. Similarly, for $k+1\leq i\leq n$, all the original terms $d_i$ must have been all equal to one. Thus 
$\d=(d_1^{k},1^{n-k})$. Consequently, if $k=x$, we have $\d=((2x-1)^{x},1^{x^2-1})$  as $n=x^2+x-1$. In this case, $\d$ has sum  $x(2x-1)+x^2-1= 3x^2-x -1$, which is odd, regardless of whether $x$ is even or odd, contradicting the hypothesis.
On the other hand, if $k=x+1$, we have $\d=((2x-1)^{x+1},1^{x^2-2})$. Here, $\d$ has sum  $(2x-1)(x+1)+x^2-2= 3x^2 +x-3$, which is again odd, regardless of whether $x$ is even or odd, contrary to the hypothesis.
\end{proof}

\bibliographystyle{amsplain}
\bibliography{erdosgallai}

  \end{document}